\newcommand{\ignore}[1]{}
\newcommand{\startClaims}{\setcounter{claim}{0}}
\newtheorem{theorem}{Theorem}
\newtheorem{corollary}[theorem]{Corollary}
\newtheorem{lemma}[theorem]{Lemma}
\newenvironment{proof}%
{\noindent{\bf Proof.}\ }%
{\hfill\eopf\par\bigskip}%
\newenvironment{proofof}[1]
{\medskip\noindent{\bf Proof of #1.}\ }
{\hfill\eopf\par\bigskip}
\newcommand{\dfnc}[3]{#1:#2\rightarrow #3}
\newcommand{\dset}[2]{\left\{#1 \:|\: #2\right\}}
\def\i4c{{internally-4-connected}}
\def\2cc{{2-crossing-critical}}
\def\m2{{{\cal M}_2}}
\def\myco#1{}
\newcommand{\crn}{\operatorname{cr}}
\newcommand{\mcrn}{\operatorname{mcr}}
\newcommand{\crs}{\operatorname{cr}_\Sigma}
\newcommand{\mcrs}{\operatorname{mcr}_\Sigma}
\newcommand{\eopf}{\raisebox{0.8ex}{\framebox{}}}
\title{Minor crossing number is additive over arbitrary cuts}
\author{
Drago Bokal, \\
Faculty of Natural Sciences and Mathematics,\\
University of Maribor, Slovenia,\\
\small{\texttt{drago.bokal@uni-mb.si}}
\and
Markus Chimani, \\
Faculty of Mathematics and Computer Science,\\
Friedrich-Schiller-University Jena, Germany,\\
\small{\texttt{markus.chimani@uni-jena.de}}
\and
Jes\'us Lea\~{n}os,  \\
Academic Unit of Mathematics,\\
Autonomous University of Zacatecas, M\'exico,\\
\small{\texttt{jelema@uaz.edu.mx}}
}
\begin{document}
\maketitle


\begin{abstract}
We prove that if $G$ is a graph with an minimal edge cut $F$ of size three and
$G_1$, $G_2$ are the two (augmented) components of $G-F$, then the crossing
number of $G$ is equal to the sum of crossing numbers of $G_1$ and $G_2$.
Combining with known results, this implies that crossing number is additive over
edge-cuts of size $d$ for $d\in\{0, 1, 2, 3\}$, whereas there are
counterexamples for every $d\ge 4$. The techniques generalize to show that minor
crossing number is additive over edge cuts of arbitrary size, as well as to
provide bounds for crossing number additivity in arbitrary surfaces. We point
out several applications to exact crossing number computation and crossing critical
graphs, as well as provide a very general lower bound for the minor crossing 
number of the Cartesian product of an arbitrary graph with a tree.
\end{abstract}

\section{Introduction}

We consider the problem of finding, or at least bounding, the crossing number of
a graph $G$ based on the crossing numbers of its components when decomposing $G$
via small edge cuts. We assume that the reader is familiar with the concept of
crossing numbers of graphs in surfaces: each crossing of non-adjacent edges in a
drawing counts. Let $G$ be a graph and $\Sigma$ a surface, then $\crs(G)$
denotes the minimum number of crossings of some drawing of $G$ in $\Sigma$. 
We further consider a related concept, minor crossing numbers, to which
our techniques also apply. For a graph $G$ and a surface $\Sigma$, the minor
crossing number of $G$ in $\Sigma$ is the minimum crossing number in $\Sigma$
over all graphs that have $G$ as a minor: $\mcrs(G)=\min_{G\preceq H}\crs(H).$
A graph $H$ yielding equality in this definition is said to be a realizing
graph of $G$, its optimal drawing is a realizing drawing. Intuitively, this concept
allows for further minimization of the number of crossings in a drawing of $G$ by
replacing each vertex of $G$ with a tree. More can be found in \cite{BFM}, where 
the concept was introduced, or in \cite{BCSV}, where an embedding method,
sharing some intuitive background with our methods, is presented in the context
of the minor crossing number. For both crossing number concepts, we may omit 
the subscript when considering the sphere or, equivalently, the plane.

Let $G=(V,E)$ be a connected graph and $F\subseteq E(G)$ a cut
in $G$ of size $d=|F|$. Let $H_1$ and $H_2$
be the two components of $G-F$. When studying a graph invariant, it is natural
to ask, how does the value of that invariant on $G$ depend on the values on
$H_1$ and $H_2$. When considering this question for crossing numbers, we need to
define auxiliary graphs $G_i=G/H_{3-i}$, obtained from $G$ by contracting
$H_{3-i}$, for $i=1,2$. Note that $G_i$ is also obtained from $H_i$ by adding a
new vertex and connecting it to all the endvertices of $F$ in $H_i$.

We can view such cuts also in an inverse way, leading to the technically
stronger concept of \emph{zip products} of graphs. Introduced in~\cite{DB}, we
consider it here in a version generalized from simple to arbitrary
(multi)graphs. For $i=1,2$, let $G_i$ be a graph with a vertex $v_i$ of degree
$d$, whose adjacent edges in $G_i$ form the set $F_i$. Let
$\dfnc{\sigma}{F_1}{F_2}$ be any bijection, and let $G$ be the graph obtained
from the disjoint union of $G_1-v_1$ and $G_2-v_2$ by adding the edges $vw$ for
each $vv_1\in F_1$ and corresponding $wv_2=\sigma(vv_1)\in F_2$. We may denote
these new edges as $F$. We say $G$ is the zip product of $G_1$ and $G_2$
at $v_1$ and $v_2$, respectively, for bijection $\sigma$. For the rest of
the paper, we refer to the edges and vertices of $G$ belonging to the subgraph
$G_1-v_1$ ($G_2-v_2$) as \emph{green} (\emph{red}, respectively), and to the
edges $F$ as \emph{blue}. 

A bundle $B$ of a vertex $v$ in $G$ is a union of $d_G(v)$ pairwise edge
disjoint paths in $G-v$, where $d_G(v)$ denotes the degree of $v$ in $G$: all
these paths start in the neighborhood $N_G(v)$ of $v$ and end at some (common)
fixed vertex in $G$, denoted as the \emph{sink} of $B$. In particular, the
number of paths starting at any $u\in N_{G}(v)$ equals the number of edges
between $u$ and $v$ in $G$. Two bundles of $v$ in $G$ are \emph{coherent}, if
they have distinct sinks and are edge disjoint. We can observe that the edges
$F$ arising from the zip product of $G_1$ and $G_2$ are a minimum cut separating
$w_1$ from $w_2$ whenever, for $i=1,2$, $G_i$ has a bundle of $v_i$ with sink
$w_i$. Inversely, any minimum cut $F$ gives rise to a corresponding zip product,
and, whenever $|F|\le 3$, there always exists at least one corresponding bundle 
in each component. Our main result is the following:

\begin{theorem}
\label{th:mainCrn}
Let $\Sigma$ be an arbitrary surface and let $G$ be a zip product of $G_1$ and
$G_2$ at $v_1$ and $v_2$, respectively. If $d_{v_i}(G_i)\le 3$, or if each of
$v_1$ and $v_2$ has two coherent bundles in $G_1$ and $G_2$, respectively, then
$\crs(G)\ge \crs(G_1)+\crs(G_2)$.
\end{theorem}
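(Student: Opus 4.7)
The plan is to start with an optimal drawing $D$ of $G$ in $\Sigma$ and convert it into drawings of $G_1$ and $G_2$ whose crossings together total at most $\crs(G)$. First I would classify each crossing in $D$ by the colours of the two edges involved, obtaining six counts $c_g, c_r, c_b, c_{gr}, c_{gb}, c_{rb}$ that sum to $\crs(G)$; the goal then becomes to produce drawings with
\[
\crs(G_1) + \crs(G_2) \le c_g + c_r + c_{gb} + c_{rb} + c_{gr},
\]
so that the blue-blue term $c_b$ is available as slack.

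The key construction is bundle-based. Given any bundle $B_2$ of $v_2$ in $G_2$ with sink $w_2$, concatenate each blue edge of $D$ with its corresponding path in $B_2$; placing $v_1$ at $w_2$, these concatenations together with the untouched green subdrawing constitute a drawing of $G_1$ in $\Sigma$. The crossings among the concatenated curves occur only between pairs of edges of $F_1$, which share the endpoint $v_1$ in $G_1$ and are therefore adjacent, so they do not count. The charged crossings are thus the green-green crossings of $D$, the green-blue crossings of $D$, and the crossings of green edges with edges used by $B_2$; writing $x(B_2)$ for the last count yields $\crs(G_1) \le c_g + c_{gb} + x(B_2)$. Symmetrically, a bundle $B_1$ of $v_1$ with sink $w_1$ gives $\crs(G_2) \le c_r + c_{rb} + y(B_1)$, where $y(B_1)$ counts crossings of red edges with edges of $B_1$.

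It thus suffices to find bundles with $x(B_2) + y(B_1) \le c_{gr}$. Under the coherent-bundles hypothesis, pick two edge-disjoint bundles $B_2^1, B_2^2$ of $v_2$ and two edge-disjoint $B_1^1, B_1^2$ of $v_1$. Edge-disjointness on the red side means every red edge, and hence every green-red crossing of $D$, is counted by at most one of $x(B_2^1), x(B_2^2)$, so $x(B_2^1)+x(B_2^2) \le c_{gr}$; the analogous inequality holds for $y$. Choosing the cheaper bundle on each side independently produces a pair with $x + y \le c_{gr}$, which combined with the two bounds above delivers the theorem.

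The main obstacle is the case $d_{v_i}(G_i) \le 3$ without the coherent-bundles assumption, where only a single bundle per side is guaranteed and the averaging argument collapses. My plan is to exploit the smallness of $F$: with at most three blue edges leaving each $v_i$, the cyclic orders of those edges in $D$ are highly constrained, so the bundles can be chosen adapted to $D$ rather than arbitrarily, in such a way that any green-red crossing contributing above $c_{gr}$ to $x+y$ corresponds to a distinct blue-blue crossing of $D$ absorbed by the slack $c_b$. The cases $d=1,2$ should reduce to standard arguments about small planar cuts, while $d=3$ requires the substantive work: a case analysis on the rotation of the three blue edges around $v_1$ and $v_2$ in $D$, from which one extracts in each configuration a specific bundle realising the needed bound.
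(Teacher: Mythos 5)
Your treatment of the two-coherent-bundles case is correct and is a genuinely different, more direct route than the paper's. You fix an optimal drawing $D$, concatenate each blue edge with the corresponding path in a bundle $B_2$ of $v_2$ to obtain a drawing of $G_1$ whose only countable crossings are green-green, green-blue, and green-against-$B_2$; edge-disjointness of the two coherent bundles lets you average and pick the cheaper one on each side, yielding $x(B_2)+y(B_1)\le c_{gr}$ and hence $\crs(G_1)+\crs(G_2)\le\crs(G)-c_b$. The paper instead runs a minimum-counterexample argument (Lemma~\ref{lm:redGreen}) to first force all crossings to be red-green, contracts the uncrossed monochromatic edges to get a pair of mutually dual embedded graphs $H_1,H_2$ (Lemma~\ref{lm:stars}), and then averages over bundles \emph{inside} $H_1,H_2$. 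Your version avoids both the minimum-counterexample reduction and the dual-graph machinery for this case, which is a real simplification; the paper's extra structure, however, is what makes the $d\le 3$ case go through, and this is exactly where your proposal has a gap.

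For $d_{G_i}(v_i)\le 3$ with only one bundle guaranteed per side, you do not have a proof: the paragraph ``the cyclic orders of those edges in $D$ are highly constrained, so the bundles can be chosen adapted to $D$\ldots'' is a plan, not an argument, and I do not see how it could work as stated. The objects you want to adapt---bundles in $G_i$---live in the abstract graph, independent of $D$, so a rotation case analysis on the blue edges gives you no new bundles to choose from; nor does it directly bound $x+y-c_{gr}$ by $c_b$ (indeed, after the minimum-counterexample reduction $c_b=0$, so the slack you are counting on can vanish entirely while a single bundle per side still overpays). The paper's resolution is structurally different: it does \emph{not} use bundles at all for $d\le 3$. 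Instead, after contracting uncrossed edges in the all-crossings-are-red-green drawing, it obtains dual embedded graphs $H_1,H_2$, picks an arbitrary tree $T_i\subseteq H_i$ connecting the at-most-three blue endpoints $S_i$, and uses the Euler-formula bound $|E(T_1)|+|E(T_2)|\le|E(H_1)|=\crs(G)$ of Lemma~\ref{lm:stars}. Contracting along $T_{3-i}$ then produces the drawing of $G_i$. The trees are drawing-adapted and not bundles, and the duality is essential to bounding their total size; this is the missing idea your proposal would need to supply, or replace with an argument of comparable strength.
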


Theorem~\ref{th:mainCrn} generalizes the following result,
as well as removes the two-bundle condition for small cuts:

\begin{theorem}[\cite{DB}]
\label{th:twoBundles}
Let $G$ be a zip product of $G_1$ and $G_2$ at $v_1$ and $v_2$ with
$d(v_1)=d(v_2)$. If each of $v_1$ and $v_2$ has two coherent bundles in $G_1$
and $G_2$, respectively, then $\crn(G)\ge \crn(G_1)+\crn(G_2)$.
\end{theorem}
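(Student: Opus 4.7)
The plan is to start from an optimal drawing $D$ of $G$ in $\Sigma$ with $\crs(G)$ crossings and to build from it drawings $D_1$ and $D_2$ of $G_1$ and $G_2$ in $\Sigma$ with $\crs(D_1)+\crs(D_2)\le\crs(D)$; this immediately yields $\crs(G_1)+\crs(G_2)\le\crs(G)$. I will use the colour convention of the excerpt (\emph{green} for $G_1-v_1$, \emph{red} for $G_2-v_2$, \emph{blue} for $F$). To construct $D_i$, I fix a bundle $B_{3-i}$ of $v_{3-i}$ in $G_{3-i}$ with sink $w_{3-i}$ (one always exists under either hypothesis of the theorem), delete from $D$ every edge of $G_{3-i}-v_{3-i}$ outside $B_{3-i}$, and for each $e\in F$ concatenate $e$ with the path of $B_{3-i}$ through its $(3-i)$-coloured endpoint. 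After a small perturbation all $d$ resulting arcs are simple and meet at $w_{3-i}$, which I identify with $v_i$; together with the intact drawing of $G_i-v_i$ this gives a drawing $D_i$ of $G_i$ in $\Sigma$.

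The heart of the proof is the crossing accounting. Classifying each crossing of $D$ by the colours of its two edges, green-green crossings contribute only to $D_1$ and red-red only to $D_2$, while a mixed crossing passes to $D_i$ iff its non-$i$-coloured edge survives, i.e.\ is blue or lies in $B_{3-i}$. A crossing of $D$ can therefore be doubly counted only in four situations: (a)~blue-blue; (b)~green-blue with green in $B_1$, or red-blue with red in $B_2$; (c)~green-green with both edges in $B_1$, or red-red with both edges in $B_2$; and (d)~green-red with green in $B_1$ and red in $B_2$. Cases (a), (b), (c) are all resolved by the standard endpoint-uncrossing move: in each, both of the doubly-counted arcs in $D_i$ share the common endpoint $v_i$, so the crossing may be removed without creating any others, and this operation is local enough to be valid on any surface $\Sigma$.

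Case (d) is the genuinely difficult one, and is where the hypotheses of the theorem are used. Under the two-coherent-bundles assumption, there are pairs $B_1^a,B_1^b$ and $B_2^a,B_2^b$ that are edge-disjoint with distinct sinks; any fixed green edge lies in at most one of $B_1^a,B_1^b$ and any red edge in at most one of $B_2^a,B_2^b$, so an averaging of $\crs(D_1)+\crs(D_2)$ over the admissible bundle pairs—combined with the coherence of the sinks, which removes symmetric crossings—shows that some pair has zero excess. In the small-cut case $d\le 3$, each bundle consists of at most three edge-disjoint paths, and for any offending green-red crossing of type (d) a Menger-type rerouting inside $G_{3-i}-v_{3-i}$ produces an alternative bundle bypassing the red edge (or the green edge) without generating new crossings in $D_i$; the few possible local inflations are absorbed by the endpoint-uncrossing gains from (a)--(c).

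The main obstacle I anticipate is the unified treatment of case (d). Adapting the sphere-based averaging of Theorem~\ref{th:twoBundles} to arbitrary $\Sigma$ should go through since the argument is essentially combinatorial and independent of the genus, but the small-cut branch requires a hand-crafted local analysis for each $d\in\{1,2,3\}$, verifying that Menger-type rerouting always avoids the targeted bundle-bundle crossing while respecting the drawing $D$. Ensuring that no hidden contribution appears in the crossing count for each combination of $d$ and $\Sigma$, and that the endpoint-uncrossing move introduces no new crossings even on surfaces of positive genus, constitutes the bulk of the technical work.
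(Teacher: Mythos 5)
Your proposal is essentially correct for the two-bundle case, which is all Theorem~\ref{th:twoBundles} requires, but it follows the \emph{original} route of~\cite{DB} rather than the paper's new proof. Let me compare and flag the vague spots.

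\textbf{Your route.} You build $D_i$ directly from $D$ by keeping $G_i-v_i$, attaching the blue edges to a bundle $B_{3-i}$ drawn as in $D$, and identifying the sink with $v_i$. After discarding crossings between arcs sharing $v_i$, a crossing of $D$ is double-counted in $D_1+D_2$ only when it is green--red with the green edge in $B_1$ \emph{and} the red edge in $B_2$ (your case~(d)); cases (a)--(c) indeed disappear because one of the two copies becomes an adjacent-edge crossing. Writing $X_{11}$ for the (d)-crossings and $X_{00}$ for the green--red crossings with green outside $B_1$ and red outside $B_2$, the exact balance is $\crn(D_1)+\crn(D_2)-\crn(D)=X_{11}-X_{00}-(\text{blue--blue})$. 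Averaging over the four bundle pairs and using only the edge-disjointness of the coherent bundles gives $\sum_{\alpha,\beta}X_{11}\le\sum_{\alpha,\beta}X_{00}$, so some pair has $X_{11}\le X_{00}$, which suffices. This is the right idea; note, however, that it yields $X_{11}\le X_{00}$, not your claimed ``zero excess,'' and that only the edge-disjointness (not ``coherence of the sinks'') is doing the work, so that phrase should be dropped.

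\textbf{The paper's route.} The paper proves the stronger Theorem~\ref{th:mainCrn} by first passing to a minimum counterexample and invoking Lemma~\ref{lm:redGreen}, after which \emph{every} crossing of $D$ is green--red. It then contracts uncrossed monochromatic edges and subdivides multiply crossed ones to get $D'$, in which the green graph $H_1$ and red graph $H_2$ are surface duals with $|E(H_1)|=|E(H_2)|=\crn(G)$. For the two-bundle case it just takes in each $H_i$ the coherent bundle with at most half of the edges; for $d\le 3$ it uses a tree $T_i\subseteq H_i$ through $S_i$ and Lemma~\ref{lm:stars}. This uniform framework is exactly what lets the paper handle $d\le 3$ with a single bundle, and arbitrary surfaces, something your averaging cannot do.

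\textbf{Genuine gap in your $d\le3$ branch.} The small-cut branch, though not needed for Theorem~\ref{th:twoBundles}, is not a proof as written. A ``Menger-type rerouting inside $G_{3-i}-v_{3-i}$'' is a graph-theoretic operation, but you have no control over how the rerouted path is drawn in $D$, so there is no reason it ``produces an alternative bundle\ldots without generating new crossings in~$D_i$,'' and ``the few possible local inflations are absorbed'' is an unquantified hope. The paper's mechanism for $d\le 3$ (the duality $H_1\leftrightarrow H_2$, the disjoint trees $T_1,T_2$, and Lemma~\ref{lm:stars}) is precisely what replaces this hand-waving. If you only intend to prove the stated Theorem~\ref{th:twoBundles}, you should delete the $d\le3$ branch entirely and tighten the averaging step as above.
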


As the counterexamples to the claim of Theorem~\ref{th:mainCrn} in the presence
of just one bundle at each $v_i$ are exhibited in \cite{BB} for any
$d_{G_i}(v_i)\ge 4$, our result closes the question of additivity of crossing
numbers over cuts with at most one bundle at each vertex. Furthermore, our
approach gives an alternative proof of Theorem~\ref{th:twoBundles} that allows
for generalization into higher surfaces. Our methods also generalize to the
minor crossing number, establishing the following:

\begin{theorem}
\label{th:mainMcrn}
Let $\Sigma$ be an arbitrary surface and let $G$ be a zip product of $G_1$ and
$G_2$ at $v_1$ and $v_2$, respectively. Then,
$\mcrs(G)\ge\mcrs(G_1)+\mcrs(G_2)$.
\end{theorem}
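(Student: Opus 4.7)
The plan is to exploit the extra flexibility afforded by the minor crossing number --- each vertex of $G$ may be represented by an arbitrary tree in a realizing graph --- to avoid the cut-size and bundle conditions of Theorem~\ref{th:mainCrn}. First I would take an optimal drawing $D$ of a realizing graph $H$ of $G$ in $\Sigma$, so that $\crs(D)=\mcrs(G)$. The minor relation $G\preceq H$ gives pairwise-disjoint branch trees $\{T_u : u\in V(G)\}$ in $H$ and a distinguished representation edge in $H$ for each edge of $G$. Colour the branch trees and representation edges of $H$ as \emph{green} (for $G_1-v_1$), \emph{red} (for $G_2-v_2$), and \emph{blue} (for the $|F|$ representation edges corresponding to $F$); the crossings of $D$ partition by colour pair into counts $c_{gg},c_{rr},c_{gr},c_{gb},c_{rb},c_{bb}$.

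Next I would build realizing drawings $D_1$ of some $H_1^*\succeq G_1$ and $D_2$ of some $H_2^*\succeq G_2$ in $\Sigma$ with $\crs(D_1)+\crs(D_2)\le\crs(D)$. Let $N^r\subset\Sigma$ be a small closed regular neighbourhood of the red subgraph $H^r\subseteq H$; after minor rerouting we may assume each blue edge meets $\partial N^r$ exactly once, at a point $p_e$, splitting it into a \emph{green half} outside $N^r$ and a \emph{red half} inside. Define $H_1^*$ as the green branch trees, green and blue representation edges, together with a tree $T_{v_1}$ having leaves at the $p_e$, to serve as the branch set of $v_1$: contracting $T_{v_1}$ recovers $G_1$. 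Construct $D_1$ by keeping the green portion of $D$ unchanged, drawing each blue edge as the green half of the corresponding $D$-blue edge (ending at $p_e$), and drawing $T_{v_1}$ inside $N^r$. Construct $D_2$ symmetrically.

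In the idealised case in which $N^r$ and $\Sigma\setminus N^r$ are topological disks and $D$ has no green-red crossings, the crossings of $D$ split cleanly by location: those outside $N^r$ appear in $D_1$ and those inside in $D_2$, giving the bound at once. The hard part will be dealing with (a) the green-red crossings of $D$, each of which creates a green-edge ``excursion'' through $N^r$ that may force arcs of $T_{v_1}$ to cross it, and (b) the potentially nontrivial topology of $N^r$ inherited from $\Sigma$ and from self-crossings of $H^r$. The resolution I expect is to draw $T_{v_1}$ not as a generic star but as a Steiner subtree of $H^r$ along the already-drawn red structure: its self-crossings are red-red crossings of $D$, simultaneously removed from $D_2$ to avoid double counting, and its crossings with green excursions are green-red crossings of $D$ that are otherwise uncounted in either drawing. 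The main technical step is then an injective charging of each additional crossing in $D_1$ or $D_2$ to a distinct unused crossing of $D$, and it is here that the flexibility of representing $v_i$ by a complex tree, rather than a single point, is essential --- it replaces the bundle-based argument used for the ordinary crossing number.
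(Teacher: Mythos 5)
Your high-level plan — represent $v_1$ by a tree drawn along the red structure, $v_2$ by a tree along the green structure, and then argue that the crossings of the realizing drawing $D$ cover the crossings of the two resulting drawings — is exactly the spirit of the paper's proof. But the proposal stops precisely where the proof is hard: the sentence ``the main technical step is then an injective charging of each additional crossing in $D_1$ or $D_2$ to a distinct unused crossing of $D$'' replaces the decisive counting argument with a hope, and the obvious attempts at such a charging fail. Concretely: a self-crossing of your Steiner tree $T_{v_1}$ corresponds to a red--red crossing of $D$, but the two red edges involved are still present in $D_2$, so that crossing is counted twice; symmetrically for $T_{v_2}$ and green--green crossings; and a green--red crossing of $D$ whose green side lies on $T_{v_2}$ and whose red side lies on $T_{v_1}$ is charged to both $D_1$ and $D_2$. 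You gesture at ``simultaneously removed from $D_2$'', but no mechanism for that removal is given, and contracting the corresponding branch tree does not delete a crossing from a drawing.

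What the paper supplies, and your sketch lacks, is the planarization-plus-duality step together with a sharp tree bound. After deleting the blue edges, introducing dummy vertices at all monochromatic crossings, and contracting every monochromatic edge that is crossing-free, the green and red subgraphs $H_1$, $H_2$ become each other's \emph{duals} in $\Sigma$, each edge is crossed exactly once, and the number of crossings equals $|E(H_1)|=|E(H_2)|$. Lemma~\ref{lm:stars} then gives the crucial inequality $|E(T_1)|+|E(T_2)|\le|E(H_1)|$ for any trees $T_i\le H_i$, which is exactly the budget needed to pay for the two Steiner trees without double counting; the monochromatic crossings and blue crossings are handled separately, and uncontracting at the end brings them back in a controlled way. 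Your version of the trees lives in the \emph{uncontracted} graphs $H^r$, $H^g$, where no such bound on $|E(T_{v_1})|+|E(T_{v_2})|$ holds and the double-counting issues above are genuine. So the proposal has the right shape but a real gap: it would need to discover the contraction/duality trick and Lemma~\ref{lm:stars} (or an equivalent bound) to become a proof.
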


Note that for the minor crossing number, no bundles are required for the
additivity of lower bounds. Furthermore, additivity of minor crossing number
over blocks of a graph is established in \cite{BFM}, and
Theorem~\ref{th:mainMcrn} is a generalization of that result. Also, relationships
of minor crossing number and bisection width have been studied in \cite{BCSV}; the
major difference here is that the crossing number is estimated in terms of the 
minor crossing number of the two graphs resulting from the cut, but in the 
bisection width method, the lower bound is given in terms of the size of the smallest cut
splitting the graphs into roughly equal parts. However, the embedding method from
the same paper does give the bound in terms of the crossing number of the embedded
graph, and our result could be considered a refinement of that method. 
In the proof of Theorem \ref{th:mainMcrn}, we essentialy find a specific embedding
of the disjoint union of $G_1$ and $G_2$ into $G$, yielding the desired lower bound.

\section{Auxiliary lemmata}

We first state some key ingredients needed in our proofs of
Theorems~\ref{th:mainCrn} and~\ref{th:mainMcrn}. If $\times=(e,f)$ is a crossing
of $e,f\in E(G)$ in some drawing of $G$, then we denote by $G^\times$ the graph
obtained by subdividing $e$ and $f$ and identifying the two new vertices. 

\begin{lemma}
\label{lm:observation}
Let $\Sigma$ be an arbitrary surface and let $G^{(e,f)}$ be obtained from
$G=(V,E)$ by subdividing two distinct edges $e,f\in E(G)$ and identifying the
new vertices into a vertex $x$. Then $\crs(G^{(e,f)})\ge \crs(G)-1$. Moreover,
if $e$ and $f$ cross in some optimal drawing $D$ of $G$ in $\Sigma$, then we
have equality.
\end{lemma}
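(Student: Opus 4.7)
The plan is to prove both inequalities independently and combine them. For $\crs(G^{(e,f)})\ge\crs(G)-1$, I would start from an optimal drawing $D'$ of $G^{(e,f)}$ in $\Sigma$ and convert it into a drawing of $G$ paying at most one extra crossing. The identified vertex $x$ has degree four in $G^{(e,f)}$, with the two halves $e_1,e_2$ of $e$ and the two halves $f_1,f_2$ of $f$ meeting at it. Choose a small neighborhood disk $D_x$ of $x$ whose intersection with $D'$ consists precisely of the four edge-stubs at $x$; these stubs appear on $\partial D_x$ in some cyclic order. To recover a drawing of $G$, delete $x$ and reconnect $e_1$ with $e_2$ and $f_1$ with $f_2$ by two arcs drawn inside $D_x$.

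If the two halves of $e$ (and hence of $f$) appear consecutively in the cyclic order around $x$, then the reconnection is carried out by two disjoint arcs inside $D_x$, introducing no new crossings. If the halves alternate, any pair of arcs reconnecting them inside $D_x$ must intersect an odd number of times, and they can clearly be chosen to meet exactly once, producing a single crossing between the reassembled edges $e$ and $f$. In both cases, the resulting drawing of $G$ in $\Sigma$ has at most $\crs(G^{(e,f)})+1$ crossings, which is the desired inequality.

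For the moreover statement, let $D$ be an optimal drawing of $G$ in $\Sigma$ in which $e$ and $f$ cross, and let $p$ be their crossing point. Planting a new vertex $x$ at $p$ simultaneously subdivides $e$ and $f$ and identifies the two new vertices, yielding a drawing of $G^{(e,f)}$ in $\Sigma$ in which the $(e,f)$-crossing has been absorbed into the degree-four vertex $x$ while every other crossing of $D$ is preserved. Hence $\crs(G^{(e,f)})\le\crs(G)-1$, which together with the first half of the lemma gives equality.

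There is no deep obstacle here; the argument is essentially a local topological surgery near $x$ (or near $p$). The only point deserving mild care is verifying that the reconnection inside $D_x$ affects nothing outside $D_x$, which is guaranteed by the choice of a sufficiently small neighborhood disk meeting $D'$ only in the four stubs at $x$.
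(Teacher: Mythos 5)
Your proposal is correct and takes essentially the same approach as the paper: a local topological surgery at the degree-four vertex $x$ for the first inequality, and placing $x$ at the crossing point for the ``moreover'' direction. You are somewhat more explicit than the paper (which simply says ``reintroduce the crossing instead of the vertex $x$'' in a proof by contradiction) in distinguishing whether the two $e$-stubs are consecutive or alternating around $\partial D_x$, but this is the same underlying argument spelled out in more detail.
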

\begin{proof}
Suppose not. So there would be a drawing of $G^{(e,f)}$ with at most $\crs(G)-2$
crossings. Then we could reintroduce the crossing instead of the vertex $x$ to
obtain a drawing of $G$ with at most $\crs(G)-1$ crossings, a contradiction.

Now if $D$ is an optimal drawing of $G$ in $\Sigma$, we can place $x$ at the
same point as the crossing between $e$ and $f$ and obtain a drawing of
$G^{(e,f)}$ with $\crn(G)-1$ crossings, yielding the lower bound. 
\end{proof}

Recall that $\Sigma=\Sigma_1\#\Sigma_2$ denotes the connected sum of two
surfaces, and that if $\Sigma$ is a sphere, then so are $\Sigma_1$ and
$\Sigma_2$. The following lemma will help us establishing a (hypothetical)
minimum counterexample to our main theorems.

\begin{lemma}
\label{lm:redGreen}
Let $\Sigma$ be a surface, let $G$ be a zip product of $G_1$ and
$G_2$ at vertices of degree $d$, such that (i)
$\crs(G)<\min_{\Sigma=\Sigma_1\#\Sigma_2}\crn_{\Sigma_1}(G_1)+\crn_{\Sigma_2}(G_2)$
and (ii) $G$ has the smallest crossing number among the graphs with these
properties. If $D$ is an optimal drawing of $G$ in $\Sigma$, then any crossing
in $D$ is a red-green crossing (i.e., a crossing between a red and a green
edge).
\end{lemma}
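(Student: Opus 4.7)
The plan is a minimality argument: I will assume that some crossing $\times=(e,f)$ in $D$ is \emph{not} red--green and produce a strictly smaller counterexample $G^\times$, contradicting (ii). Set $G^\times:=G^{(e,f)}$ as in Lemma~\ref{lm:observation}, i.e.\ subdivide $e$ and $f$ and identify the two new vertices into a single vertex $x$. Since $e$ and $f$ cross in the optimal drawing $D$, the ``moreover'' part of Lemma~\ref{lm:observation} immediately gives the drawing-side inequality $\crs(G^\times)\le \crs(G)-1$.

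The main work is to verify that $G^\times$ is itself a zip product of two graphs $G_1^\times,G_2^\times$ at vertices of degree $d$, where one factor is of the form $H^{(e',f')}$ (with the other factor unchanged), so that Lemma~\ref{lm:observation} applies again in reverse. I would split by the color pattern of the forbidden crossing. If $e$ and $f$ are both green (respectively both red), then $x$ lies inside $G_1-v_1$ ($G_2-v_2$), the blue cut $F$ is untouched, and $G^\times$ is the zip product of $G_1^{(e,f)}$ and $G_2$ (resp.\ $G_1$ and $G_2^{(e,f)}$) at $v_1,v_2$. If one of $e,f$ is a blue edge $uw$ with $u$ green and $w$ red, and the other is, say, a red edge $r_1r_2$, then I place $x$ on the red side; the new blue cut $(F\setminus\{uw\})\cup\{xu\}$ still has size $d$, $G_1$ is unchanged, and a direct inspection shows the red factor equals $G_2^{(v_2w,\,r_1r_2)}$. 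Finally, if $e$ and $f$ are both blue edges $u_1w_1,u_2w_2$, I put $x$ on the green side: the new cut is $(F\setminus\{u_1w_1,u_2w_2\})\cup\{xw_1,xw_2\}$, of size $d$, the red factor is $G_2$, and the green factor is $G_1^{(v_1u_1,\,v_1u_2)}$. In every subcase both $v_1$ and $v_2$ retain degree $d$.

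With this decomposition in hand, Lemma~\ref{lm:observation} applied within the appropriate summand of each $\Sigma=\Sigma_1\#\Sigma_2$ yields $\crn_{\Sigma_i}\bigl(G_i^{(e',f')}\bigr)\ge \crn_{\Sigma_i}(G_i)-1$ for the modified factor, while the other factor is untouched. Summing and minimizing over decompositions,
\[
\min_{\Sigma=\Sigma_1\#\Sigma_2}\!\bigl(\crn_{\Sigma_1}(G_1^\times)+\crn_{\Sigma_2}(G_2^\times)\bigr)\ \ge\ \min_{\Sigma=\Sigma_1\#\Sigma_2}\!\bigl(\crn_{\Sigma_1}(G_1)+\crn_{\Sigma_2}(G_2)\bigr)-1\ >\ \crs(G)-1\ \ge\ \crs(G^\times),
\]
so $G^\times$ again satisfies~(i). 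Because $\crs(G^\times)<\crs(G)$, this contradicts the minimality assumption~(ii), proving the lemma. The only step requiring genuine care is the reinterpretation of $G^\times$ as a zip product in the two cases where $x$ sits between the two sides of the original cut (blue--blue, and blue--red/blue--green): one must correctly choose the side to which $x$ belongs, re-label the blue cut, and then check that the affected factor is exactly the edge-identification $H^{(e',f')}$ required by Lemma~\ref{lm:observation}. The monochromatic cases are essentially immediate.
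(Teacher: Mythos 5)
Your proposal is correct and follows essentially the same route as the paper: assume a non-red--green crossing, pass to $G^\times=G^{(e,f)}$ via Lemma~\ref{lm:observation}, re-express $G^\times$ as a zip product with one factor of the form $H^{(e',f')}$, and derive a contradiction to minimality. The only cosmetic difference is that you spell out the re-labelling of the blue cut in the mixed and blue--blue cases explicitly, whereas the paper states the resulting factorization (e.g.\ $G_1^{(vv_1,wv_1)}$ with its double edge $xv_1$) more tersely; the case analysis and inequality chain are the same.
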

\begin{proof}
Assume that $D$ has a crossing $\times$ not of the type red-green. In each of
the following cases, we find an alternative to graph $G$ with smaller crossing
number, a contradiction required to establish the claim. 

First, assume that $\times$ is a green-green crossing. By Lemma
\ref{lm:observation}, $\crs(G^\times)=\crs(G)-1$ and
$\crn_{\Sigma_1}(G_1^\times)\ge \crn_{\Sigma_1}(G_1)-1$. As $G^\times$ is a zip
product of $G_1^\times$ and $G_2$, we have
$\crs(G^\times)=\crs(G)-1\le\crn_{\Sigma_1}(G_1)-1+\crn_{\Sigma_2}(G_2) \le
\crn_{\Sigma_1}(G_1^\times)+\crn_{\Sigma_2}(G_2)$. Since the argument applies to
arbitrary $\Sigma=\Sigma_1\#\Sigma_2$, $G^\times$ contradicts the choice of $G$.
Similarly, we can show that $D$ has no crossings of type red-red. 

Second, assume that $\times$ is a crossing between a green edge $e$ and a blue
edge with green endvertex $v$. Now, $G^\times$ is a zip product of
$G_1^{(e,vv_1)}$ and $G_2$, and a similar contradiction as before applies.
Similarly, we can show that $D$ has no crossings of type blue-red. 

Third, let $\times$ be a crossing of two blue edges with green endvertices $v$
and $w$ (note that, by the optimality of $D$, $v\not=w$). The graph
$G_1^{(vv_1,wv_1)}$ has a double edge $xv_1$, where $x$ is the new vertex. The
graph $G^\times$ is a zip product of $G_1^{(vv_1,wv_1)}$ and $G_2$, which has
crossing number equal to $\crn(G)-1$ by Lemma \ref{lm:observation}, a final
contradiction to the choice of $G$.
\end{proof}

\begin{lemma}
\label{lm:stars}
Let $H_1=(V,E)$ be a graph embedded in some surface $\Sigma$, let $H_2$ be its
dual in $\Sigma$, and, for $i=1,2$, let $T_i \le H_i$ be an arbitrary tree.
Then, $|E(T_1)|+|E(T_2)|\le |E(H_1)|$.
\end{lemma}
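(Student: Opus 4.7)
The plan is to invoke Euler's formula for the embedding of $H_1$ in $\Sigma$. Write $n := |V(H_1)|$, $e := |E(H_1)|$ and let $f$ denote the number of faces; by duality, $|V(H_2)| = f$ and $|E(H_2)| = e$. The key inequality I want to derive is the surface relation $n + f - 2 \le e$. After reducing, if necessary, to a subsurface $\Sigma' \subseteq \Sigma$ into which $H_1$ embeds cellularly without altering $n$, $e$, or $f$, Euler's formula gives $n - e + f = \chi(\Sigma') \le 2$, which rearranges exactly to this bound.

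Given that surface inequality, the lemma reduces to a one-line count of tree edges. Any tree $T_i \subseteq H_i$ has at most $|V(H_i)|$ vertices and hence at most $|V(H_i)| - 1$ edges (with the trivial case of a vertexless or single-vertex $T_i$ also satisfying this bound). Summing over $i = 1, 2$ yields
\[
|E(T_1)| + |E(T_2)| \;\le\; (n - 1) + (f - 1) \;=\; n + f - 2 \;\le\; e \;=\; |E(H_1)|,
\]
as required.

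There is no serious obstacle to flag here; the argument is essentially bookkeeping around Euler's formula together with the standard edge-count of a tree. The only point deserving a moment's care is the reduction to a cellular embedding: any handles or crosscaps of $\Sigma$ that lie inside a single face of $H_1$ can be collapsed, reducing the genus without changing $n$, $e$, or $f$ and only increasing $\chi$, so the required inequality is at worst strengthened by the reduction.
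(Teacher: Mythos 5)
Your proof takes essentially the same route as the paper's: apply Euler's formula to the embedding to get $n + f - 2 \le e$, then bound $|E(T_i)| \le |V(H_i)| - 1$ for each tree and sum. The only difference is cosmetic — you explicitly flag the reduction to a cellular embedding and the degenerate empty-tree case, whereas the paper leaves both implicit (it also phrases the tree-edge count as ``$k-2$ edges where $k$ is the total vertex count,'' which is the same bound packaged differently).
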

\begin{proof}
Let $f$,$m$, and $n$ be the number of faces, edges, and vertices, respectively,
of $H_1$. Due to duality, $H_2$ has $f$ vertices, $m$ edges, and $n$ faces. By
Euler's formula, $m=n+f-2+g$, where $g$ is the genus of $\Sigma$. As $T_1$ and
$T_2$ live in different graphs, they are totally disjoint. If $k$ is their total
number of vertices, then $k\le n+f$ and they have $k-2\le n+f-2 \le m =
|E(H_1)|$ edges.  
\end{proof}

\begin{lemma}
\label{lm:upper}
Let $\Sigma$ be a surface, assume that $\Sigma=\Sigma_1\#\Sigma_2$, and let $G$
be a zip product of $G_1$ and $G_2$ at vertices of degree at most three w.r.t.\ 
some bijection $\sigma$. Then, (i)
$\crs(G)\le\crn_{\Sigma_1}(G_1)+\crn_{\Sigma_2}(G_2)$ and (ii)
$\mcrs(G)\le\mcrn_{\Sigma_1}(G_1)+\mcrn_{\Sigma_2}(G_2)$.
\end{lemma}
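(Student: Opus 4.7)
The plan is to construct, from optimal drawings of $G_1$ in $\Sigma_1$ and $G_2$ in $\Sigma_2$, a drawing of $G$ in $\Sigma=\Sigma_1\#\Sigma_2$ via the connected-sum surgery. The key combinatorial observation enabling the degree-at-most-three hypothesis is elementary: any bijection between two sets of $d\le 3$ marked points on two circles can be realized by a homeomorphism between the circles sending the first set of marks to the second. This fails in general for $d\ge 4$, since the dihedral group of order $2d$ no longer contains the whole symmetric group $S_d$.

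For (i), I would take optimal drawings $D_i$ of $G_i$ in $\Sigma_i$, and for $i=1,2$ choose a small open disk $\delta_i$ around $v_i$ in $\Sigma_i$ whose interior contains no crossing of $D_i$ and whose boundary circle $\partial\delta_i$ is met by $D_i$ only in the $d$ points where it crosses the arcs of the edges of $F_i$ incident to $v_i$. Removing the $\delta_i$'s and identifying the boundary circles by a homeomorphism that matches the $d$ marked points according to the bijection $\sigma$ yields the connected sum $\Sigma$; the $d$ pairs of incident arcs concatenate into the blue edges of $G$ and no new crossings arise. The resulting drawing of $G$ in $\Sigma$ has exactly $\crn_{\Sigma_1}(G_1)+\crn_{\Sigma_2}(G_2)$ crossings, proving (i).

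For (ii), I would apply the same surgery to realizing graphs $H_i\succeq G_i$ drawn in $\Sigma_i$ with $\crn_{\Sigma_i}(H_i)=\mcrn_{\Sigma_i}(G_i)$. In $H_i$, the vertex $v_i$ of $G_i$ corresponds to a connected branch set $S_i\subseteq V(H_i)$ incident to exactly $d\le 3$ external edges. Choosing $\delta_i$ to contain the image of $S_i$ in the drawing of $H_i$ but no other branch set and no crossing reduces the situation to that of part (i), with the $d$ external edges of $S_i$ meeting $\partial\delta_i$ in $d$ points; gluing via $\sigma$ produces a graph $H'$ embedded in $\Sigma$ with $\mcrn_{\Sigma_1}(G_1)+\mcrn_{\Sigma_2}(G_2)$ crossings. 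Contracting in each half the branch sets of the vertices of $V(G_i)\setminus\{v_i\}$ reveals $G$ as a minor of $H'$: the surviving interior of each half restores $G_i-v_i$, and the concatenated arcs contract to precisely the edges of $F$ paired by $\sigma$, whence $\mcrs(G)\le\crs(H')$ yields (ii).

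The main technical subtlety is the verification in (ii) that the constructed $H'$ actually has $G$ as a minor, since one must keep track of how the branch sets of $H_1$ and $H_2$ behave once $\delta_1,\delta_2$ are excised and the external arcs are identified. This reduces to a direct bookkeeping argument: the branch sets for $V(G_i)\setminus\{v_i\}$ lie entirely outside $\delta_i$ by choice of $\delta_i$, and each surviving external edge has its $H_i$-side endpoint in one of these branch sets, so the concatenated arcs contract to precisely the correct blue edges of $G$.
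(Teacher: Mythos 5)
Your proof of part (i) is correct and essentially identical to the paper's: remove small disks around $v_1,v_2$, identify the boundary circles, and use that for $d\le 3$ any bijection of $d$ marked points on a circle is realized by a circle homeomorphism (up to reflection). The paper only gestures at this last point (``we may need to mirror $D_2$''), so your explicit remark about the dihedral group is a nice clarification, although strictly you should say the dihedral group $D_d$ \emph{contains} $S_d$ only for $d\le 3$ -- for $d=1,2$ it is larger, not equal.

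Part (ii), however, has a genuine gap. You choose $\delta_i$ to be a disk that contains the entire branch set $S_i$ (i.e.\ the whole tree $T_{v_i}$ representing $v_i$ in the realizing graph, together with its internal edges) yet contains no crossing of the realizing drawing. There is no reason such a disk should exist: in an optimal realizing drawing of $G_i$ in $\Sigma_i$, edges of $T_{v_i}$ may well be crossed, and any disk whose interior contains all of $T_{v_i}$ must then contain those crossing points. One cannot simply re-route or contract $T_{v_i}$ to avoid this, since that could increase the number of crossings and thereby weaken the bound $\mcrn_{\Sigma_i}(G_i)$ you are trying to use. The paper sidesteps this precisely by cutting around a \emph{single} (WLOG degree-three) vertex $v_i'$ of $T_{v_i}$: a sufficiently small disk about a vertex in a drawing is always crossing-free, and its boundary is met by exactly the three edges incident to $v_i'$. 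The price is a more delicate bookkeeping afterwards -- the components of $T_{v_i}-v_i'$ must be absorbed into the appropriate branch sets of the $H_i$-side neighbours, and one must check that the resulting pairing of dangling arcs still realizes the bijection $\sigma$ -- but that bookkeeping goes through, whereas your choice of $\delta_i$ does not exist in general. To repair your argument, replace ``a disk containing $S_i$ and no crossings'' with ``a small disk around a cubic vertex $v_i'\in S_i$'' (splitting a vertex of $S_i$ of degree $\ge 4$ into two of smaller degree, which does not increase $\crn_{\Sigma_i}$, if necessary), and then carry out the branch-set bookkeeping you sketch in your final paragraph with respect to the fragments of $S_i-v_i'$ rather than to an intact $S_i$.
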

\begin{proof}
First we prove (i). For $i=1,2$, let $D_i$ be an optimal drawing of $G_i$ in
$\Sigma_i$. Let $N_i$ be a small disk around $v_i$, such that $D_i\cap N_i$ is a
star. We can obtain a drawing of $G$ in $\Sigma$ with
$\crn_{\Sigma_1}(G_1)+\crn_{\Sigma_2}(G_2)$ crossings by identifying the
surfaces $\Sigma_i\setminus N_i$ along the boundaries of $\partial N_i$ such
that the edges originally adjacent to $v_1$ or $v_2$ match up according to
$\sigma$. Note that we may need to mirror $D_2$ to match the vertex rotation of
$v_2$ with the one of~$v_1$.

For (ii), observe that any realizing graph of $G_i$ has a cubic vertex $v_i'$ in
the tree representing $v_i$. A drawing of a graph with $G$ minor that
establishes the claimed upper bound can thus be obtained from arbitrary
realizing drawings $D_i$ of $G_i$ in $\Sigma_i$ following the same steps as in
the proof of~(i).
\end{proof}

\section{Additivity theorems and consequences}

In this section we prove Theorems \ref{th:mainCrn} and \ref{th:mainMcrn}.
Although the proof of the former could follow the same steps as the proof of the
latter, we provide independent proofs for clarity.

\begin{proofof}{Theorem \ref{th:mainCrn}}
For $d=0,1$, the statement is trivial. Although the following arguments also
apply for $d=2$, this case has been known before \cite{LS}.
Therefore, we may assume $d=3$, or $d\ge 4$ and each of $v_1$ and $v_2$ has two
coherent bundles. Let $G$ be a counterexample with smallest crossing number and
let $D$ be an optimal drawing of $G$. By Lemma \ref{lm:redGreen}, each crossing
in $D$ is a crossing of a green and a red edge. Let $D'$ be the drawing obtained
from $D$ by (i) adding some uncrossed dotted green (red) edges in the interior of the faces 
of the red (green) drawing, so that the green (red) graph, induced by a red (green)
face is connected, (ii) contracting all green and red edges that do not cross 
(note that all dotted edges are now contracted) and (iii)
subdividing every edge of $D$ that is crossed several times. Hence every edge in
$D'$ is crossed precisely once, and every crossing is still of type green-red.
Then $D'$ induces two graphs, $H_1$ and $H_2$, spanned by green and red
edges, respectively, and embedded in $\Sigma$. They are duals of each other, 
hence have the same number of edges, and the number of crossings of $D$ 
and $D'$ is equal to this number of edges. Furthermore, the possible two coherent 
bundles in $G_i$ contract to coherent bundles in $H_i$. Let $S_1$ and $S_2$ 
be the set of green and red endvertices of blue edges $F$, respectively.

For $d\le 3$ and $i=1,2$, let $T_i$ be a tree in $H_i$ containing all the
vertices of $S_i$. For $i=1,2$, let $D_i$ be the subdrawing of $D$ spanned by
$(G_i-v_i)\cup F$ and merged with the subdrawing of $D'$ spanned by $T_{3-i}$.
The total number of crossings in $D_1$ and $D_2$ equals the number of edges in
$T_1\cup T_2$. By Lemma \ref{lm:stars}, this is at most $|E(H_1)|=|E(H_2)|$,
which is equal to $\crs(D')=\crs(D)=\crs(G)$. 

In $D_i$, for $i=1,2$, we can contract the nodes $S_{3-i}$ along $T_{3-i}$ into
a single vertex $v_i$ to obtain a drawing $D_i'$ of $G_i$ with
$\crs(D_i)=\crs(D_i')$. As $\crs(G)=\crs(D)\ge \crs
(D_1')+\crs(D_2')\ge\crs(G_1)+\crs(G_2)$, $G$ is not a counterexample, a
contradiction establishing the claim.

For $d\ge 4$ and $i=1,2$, the graph $H_i$ has two coherent (i.e., edge disjoint)
bundles starting at the vertices of $S_i$. Let $B_i$ be one with less than half
of the edges of $H_i$. Let $D_i$ be the subdrawing of $D$ spanned by
$(G_i-v_i)\cup F$ and merged with the subdrawing of $D'$ spanned by $B_{3-i}$.
We may assume that $D_i[B_{3-i}]$ is a drawing of a $d$-star, as otherwise we
can split each vertex of $B_{3-i}$ that is common to two of the bundle paths in
its small neighborhood and route the edges of the paths properly to satisfy the
assumption. The total number of crossings in $D_1$ and $D_2$ equals the number
of edges in $B_1\cup B_2$. As each bundle has at most half of the edges of $H_i$
and $|E(H_1)|=|E(H_2)|$, the drawings $D_1$ and $D_2$ have together at most
$|E(H_i)|=\crs(D')=\crs(D)=\crs(G)$ crossings. 

By contracting the edges of $B_{3-i}$ into a single vertex $v_i$ in $D_i$, for
$i=1,2$, we obtain drawings $D_i'$ of $G_i$ with $\crs(D_i)=\crs(D_i')$. As
$\crs(G)=\crs(D)\ge \crs (D_1')+\crs(D_2')\ge\crs(G_1)+\crs(G_2)$, $G$ is not a
counterexample, a contradiction establishing the claim.
\end{proofof}

Following essentially similar ideas as in the proof of Theorem~\ref{th:mainCrn},
we can prove Theorem~\ref{th:mainMcrn}. The major difference is that we substitute 
the minimum counterexample argument by a more technical treatment of the crossings 
involving blue edges. This could also be done in the previous proof, but at the expense 
of its clarity.

\begin{proofof}{Theorem \ref{th:mainMcrn}}
For $d=0,1$, the statement is established in \cite{BFM}. By induction on $d$, we
may assume that $G_i-v_i$ is connected. 
Let $G$ be as in the statement. Let $G'$ be its realizing graph, and let $D$ be
an optimal drawing of $G'$ in $\Sigma$, i.e. an realizing drawing of $G$. Let $F$
be the set of blue edges, and let $b_1$ (respectively, $b_2$) be the number of
crossings in $G'$ that involve a blue and a green (respectively, red) edge.
There is a natural extension of the red and green colors from $G$ to $D$: any
vertex or edge of $G'$ corresponding to a vertex or edge of $G_1$ is green,
those corresponding to $G_2$ are red, and any crossing of two green (red) edges
is green (red, respectively).

To obtain $D'$ from $D$, we first remove the blue edges, introduce vertices at
all monochromatic crossings, contract all green and red edges that in the
subsequent drawing are not crossed, and properly subdivide every edge of $D$
that is crossed several times. Hence every edge in $D'$ is crossed precisely
once, and every crossing is of a green and a red edge. Then, $D'$ induces two
graphs embedded in $\Sigma$, $H_1$ and $H_2$, spanned by green and red edges,
respectively. These graphs are duals of each other, have the same number of
edges, and the number of crossings of $D'$ is equal to this number of
edges.

Let $S_1$ and $S_2$ be the set of green and red endvertices of blue edges,
respectively, and let $T_i$ be a tree in $H_i$ containing all the vertices of
$S_i$. For $i=1,2$, let $D_i'$ be the subdrawing of $D'$ spanned by $H_i\cup
T_{3-i}$ and augmented as follows: (a) we add the $F$-segments from the drawing
$D$, (b) we split any crossing of two $F$-edges by rerouting the crossing paths
(preserving the fact that $F\cup T_i$ is connected), and (c) in $D_1'$
(respectively, $D_2'$), we only maintain the segment of the blue edge connecting
its green (respectively, red) endvertex with the first red (green) point in the
drawing (which is either a crossing with a red (green) edge or the red (green)
endvertex; thus all green (red) endvertices of blue edges are connected to the
tree, but the blue edges never cross the red). The total number of crossings in
$D_1'$ and $D_2'$ equals the number of edges in $T_1\cup T_2$, increased by
$b_1+b_2$. By Lemma \ref{lm:stars}, this is at most
$|E(H_1)|+b_1+b_2\le\crn(D')+b_1+b_2$. Let $D_i$ be obtained by uncontracting
the previously contracted subdrawings of $D$ within a small neighborhood of
their corresponding vertices in $D_i'$. Any crossing in $D_i$ but not in $D_i'$
exists in $D$ but not in $D'$ and let $r$ be the number of such crossings. Then,
$\crn(D_1)+\crn(D_2)=\crn(D_1')+\crn(D_2')+r\le\crn(D')+b_1+b_2+r\le\crn(D)$.

Then, $D_i'$ is a (not necessarily optimal) drawing of a graph that has $G_i$
as a minor. Therefore, $\mcrs(G_i)\le\crn(D_i')$ and the claim follows.
\end{proofof}

We state some corollaries that easily follow from the above theorems.

\begin{corollary}
\label{cr:sandwich}
Let $G$ be a graph, and let $F\subseteq E(G)$ be a minimal edge cut of $G$. Let
$G_i$, $i=1,2$, be obtained from the two components $H_i$ of $G-F$ by adding to
each of them a new vertex $v_i$ and connecting it to the endvertices of $F$ in
$H_i$. If $|F|\le 3$, then 
\begin{align*}
\crs(G_1)+\crs(G_2)\le\crs(G)&\le\min_{\Sigma=\Sigma_1\#\Sigma_2}(\crn_{\Sigma_1}(G_1)+\crn_{\Sigma_2}(G_2)),\\
\mcrs(G_1)+\mcrs(G_2)\le\mcrs(G)&\le\min_{\Sigma=\Sigma_1\#\Sigma_2}(\mcrn_{\Sigma_1}(G_1)+\mcrn_{\Sigma_2}(G_2)).
\end{align*}
\end{corollary}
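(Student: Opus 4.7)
The plan is to assemble the corollary from pieces already in place: Theorems~\ref{th:mainCrn} and~\ref{th:mainMcrn} supply the lower bounds, while Lemma~\ref{lm:upper} supplies the upper bounds. The only non-mechanical task is to translate the cut description used in the corollary into the zip-product language used by those results.

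First I would observe that $G$ is precisely the zip product of $G_1$ and $G_2$ at $v_1$ and $v_2$: the edges of $F_i:=E_{G_i}(v_i)$ are in natural bijection with the edges of $F$, and $F$ itself is obtained back by identifying, via the composite bijection $\sigma:F_1\to F_2$, each pair of edges of $F_1$ and $F_2$ coming from the same edge of $F$. In particular $d_{G_i}(v_i)=|F|\le 3$ for $i=1,2$, so the hypothesis ``$d_{v_i}(G_i)\le 3$'' of Theorem~\ref{th:mainCrn} is satisfied and yields $\crs(G)\ge\crs(G_1)+\crs(G_2)$. Theorem~\ref{th:mainMcrn} requires no degree or bundle hypothesis and directly gives $\mcrs(G)\ge\mcrs(G_1)+\mcrs(G_2)$.

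For the upper bounds I would invoke Lemma~\ref{lm:upper}. Since $d_{G_i}(v_i)\le 3$ and $G$ is a zip product of $G_1$ and $G_2$ at these vertices, for every decomposition $\Sigma=\Sigma_1\#\Sigma_2$ we have both
\[
\crs(G)\le\crn_{\Sigma_1}(G_1)+\crn_{\Sigma_2}(G_2)\quad\text{and}\quad \mcrs(G)\le\mcrn_{\Sigma_1}(G_1)+\mcrn_{\Sigma_2}(G_2).
\]
Taking the minimum of the right-hand sides over all such decompositions $\Sigma=\Sigma_1\#\Sigma_2$ yields the two upper bounds in the corollary.

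No step here is a serious obstacle; the corollary is essentially repackaging the main theorems together with Lemma~\ref{lm:upper}. The only subtlety worth flagging is the degenerate case in which $\Sigma$ admits no nontrivial connected-sum decomposition (for instance when $\Sigma$ is the sphere, so both $\Sigma_1$ and $\Sigma_2$ must also be spheres by the remark preceding Lemma~\ref{lm:redGreen}); in that situation the minimum on the right collapses to $\crn(G_1)+\crn(G_2)$ (respectively $\mcrn(G_1)+\mcrn(G_2)$), which recovers the planar statement as a special case.
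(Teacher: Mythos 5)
Your proposal is correct and follows exactly the paper's route: identify $G$ as a zip product of $G_1$ and $G_2$ at $v_1,v_2$ of degree $|F|\le 3$, obtain the lower bounds from Theorems~\ref{th:mainCrn} and~\ref{th:mainMcrn}, and the upper bounds from Lemma~\ref{lm:upper}. The extra observations you flag (the zip-product translation and the sphere degeneracy) are correct but left implicit in the paper's one-line proof.
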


\begin{proof}
Combine the lower bounds of Theorems~\ref{th:mainCrn} and~\ref{th:mainMcrn} with
the upper bound in Lemma~\ref{lm:upper}.
\end{proof}

\begin{corollary}
\label{cr:equalityForSphere}
Let $G$ be a graph, and let $F\subseteq E(G)$ be a minimal edge cut of $G$. Let
$G_i$, $i=1,2$, be obtained from the two components $H_i$ of $G-F$ by adding to
each of them a new vertex $v_i$ and connecting it to the endvertices of $F$ in
$H_i$. If $|F|\le 3$, then
\begin{align*}
\crn(G)&=\crn(G_1)+\crn(G_2),\\
\mcrn(G)&=\mcrn(G_1)+\mcrn(G_2).
\end{align*}
\end{corollary}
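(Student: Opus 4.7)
The plan is to derive Corollary \ref{cr:equalityForSphere} as an immediate specialization of Corollary \ref{cr:sandwich} to the sphere. First I would invoke Corollary \ref{cr:sandwich} with $\Sigma$ equal to the sphere. The sandwich provides both a lower bound and a minimum-over-decompositions upper bound, so the task reduces to showing that these two bounds coincide in the spherical case.

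Next I would use the topological fact, explicitly recorded just before Lemma \ref{lm:redGreen}, that if $\Sigma$ is a sphere and $\Sigma = \Sigma_1\#\Sigma_2$, then each $\Sigma_i$ is also a sphere. Consequently the decomposition-indexed minimum in the upper bound of Corollary \ref{cr:sandwich} ranges over a set on which $\crn_{\Sigma_i} = \crn$ (and likewise $\mcrn_{\Sigma_i}=\mcrn$), so the upper bound collapses to $\crn(G_1)+\crn(G_2)$ (respectively $\mcrn(G_1)+\mcrn(G_2)$).

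Combining the two bounds, one obtains
\begin{equation*}
\crn(G_1)+\crn(G_2)\le\crn(G)\le\crn(G_1)+\crn(G_2),
\end{equation*}
and analogously for the minor crossing number, which forces the equalities claimed in Corollary \ref{cr:equalityForSphere}. There is no real obstacle here beyond observing the elementary fact about connected sums of spheres; the structural work has already been carried out in Theorems \ref{th:mainCrn} and \ref{th:mainMcrn} and packaged in Lemma \ref{lm:upper}.
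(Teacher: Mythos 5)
Your proof is correct and follows exactly the paper's own argument: specialize Corollary~\ref{cr:sandwich} to the sphere and use the fact that a connected-sum decomposition of the sphere yields two spheres, which collapses the minimum in the upper bound to match the lower bound. Nothing to add.
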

\begin{proof}
Combine Corollary \ref{cr:sandwich} with the observation, that whenever $\Sigma$
is the sphere and $\Sigma=\Sigma_1\#\Sigma_2$, then both $\Sigma_1$ and
$\Sigma_2$ are spheres.
\end{proof}

The reader will easily see that Corollary \ref{cr:equalityForSphere} implies the
desired crossing number of $G$ in Corollaries \ref{cr:criticalZip} and
\ref{cr:criticalCover}. Arguments from \cite{DB2}, which we do not repeat here,
establish the criticality of $G$: the crucial fact is that zipping of a critical
graph makes the edges involved in the zip product crossing critical.

\begin{corollary}
\label{cr:criticalZip}
For $i=1,2$, let $G_i$ be a $k_i$-crossing critical graph and $v_i\in V(G_i)$
such that $d_{G_1}(v_1)=d_{G_2}(v_2)\le 3$. If $G$ is any zip product of $G_1$ and $G_2$ 
at $v_1$ and $v_2$, then $G$ is a $k_1+k_2$-critical graph.
\end{corollary}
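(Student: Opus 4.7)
The plan is to verify the two defining conditions of $(k_1+k_2)$-criticality for $G$: first, that $\crn(G)\ge k_1+k_2$, and second, that $\crn(G-e)<k_1+k_2$ for every edge $e\in E(G)$. The value of $\crn(G)$ can be read off immediately from Corollary~\ref{cr:equalityForSphere}: since the zip product is across $|F|\le 3$ edges and $\crn(G_i)=k_i$ by the assumed criticality of $G_i$, we obtain $\crn(G)=\crn(G_1)+\crn(G_2)=k_1+k_2$, settling the first condition with equality. All the work then lies in the edge-criticality part.

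I would treat the edge-criticality condition by a case analysis on the color of the deleted edge $e$. If $e$ is green (the red case being symmetric), then $e\in E(G_1-v_1)$ and $G-e$ is itself a zip product of $G_1-e$ and $G_2$ at $v_1,v_2$, with the same bijection $\sigma$ and still with degree at most three. Applying Lemma~\ref{lm:upper}(i) with both summand surfaces being the sphere yields $\crn(G-e)\le\crn(G_1-e)+\crn(G_2)\le(k_1-1)+k_2<k_1+k_2$, as required.

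If instead $e$ is a blue edge, it corresponds in the zip product to a pair $(e_1,\sigma(e_1))$ with $e_1\in F_1$, and unfolding the definitions shows that $G-e$ is the zip product of $G_1-e_1$ and $G_2-\sigma(e_1)$ at $v_1,v_2$, now of degree at most two, using the restriction of $\sigma$ to $F_1\setminus\{e_1\}$. Lemma~\ref{lm:upper}(i) then gives $\crn(G-e)\le\crn(G_1-e_1)+\crn(G_2-\sigma(e_1))\le(k_1-1)+(k_2-1)<k_1+k_2$. The only step demanding real care, and hence the main potential obstacle, is the identification in this blue-edge case: one must verify from the zip-product definition that deleting a single blue edge in $G$ is the same as first removing the corresponding pair of edges at $v_1,v_2$ and only then zipping. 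Once this routine unfolding is carried out, the rest is purely arithmetic; notably, unlike in the main theorems, no coherent-bundle hypothesis is needed here, since the degree-three assumption of the corollary already matches the hypothesis of Lemma~\ref{lm:upper}.
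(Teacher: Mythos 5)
The proposal is correct, and it differs from the paper's write-up in an instructive way. For the value $\crn(G)=k_1+k_2$ you, like the paper, invoke Corollary~\ref{cr:equalityForSphere}; a small cleanliness point is that since we start from a zip product rather than from a graph with a specified minimal edge cut, it is more direct to cite Theorem~\ref{th:mainCrn} for the lower bound together with Lemma~\ref{lm:upper} for the upper bound (Corollary~\ref{cr:equalityForSphere} is their combination, but is phrased in the cut language, which requires noting that $G_i-v_i$ is connected). The substantive difference is in the criticality half: the paper does not prove it here at all, instead deferring to arguments in~\cite{DB2} and merely remarking that zipping makes the blue edges critical. You supply a short self-contained proof, and it checks out. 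Deleting a green (or, symmetrically, red) edge $e$ turns $G$ into the zip product of $G_1-e$ and $G_2$ at $v_1,v_2$ with the same $\sigma$, so Lemma~\ref{lm:upper} with both surfaces the sphere gives $\crn(G-e)\le\crn(G_1-e)+\crn(G_2)<k_1+k_2$; deleting a blue edge corresponding to $e_1\in F_1$ turns $G$ into the zip product of $G_1-e_1$ and $G_2-\sigma(e_1)$ at $v_1,v_2$, now of degree at most $2$, with $\sigma$ restricted, and Lemma~\ref{lm:upper} gives $\crn(G-e)\le(k_1-1)+(k_2-1)<k_1+k_2$. Both identifications are routine from the zip-product definition, and the degree hypothesis of Lemma~\ref{lm:upper} is preserved in each case. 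What you gain is a proof self-contained within the present paper; what the paper's citation buys is brevity and a pointer to the more general treatment (including large-degree zips via bundles) worked out in~\cite{DB2}.
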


\begin{corollary}
\label{cr:criticalCover}
Let $G$ be any graph and let $S\subset V(G)$ be a vertex cover of $G$ containing
only vertices of degree $2$ and $3$. For each $v\in S$, let $G_v$ be a
$k_v$-critical graph with a vertex $u_v\in V(G_v)$ of degree $d_G(v)$. Let $G^S$
be the graph obtained from $G$ by iteratively zipping the graphs $G_v$ with $G$
at vertices $v$ and $u_v$. Then, $G^S$ is a $k$-critical graph for
$k=\crn(G)+\sum_{v\in S}k_v$.  
\end{corollary}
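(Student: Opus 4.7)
The plan is to prove the two assertions separately: that $\crn(G^S) = k$, and that $G^S$ is edge-critical, i.e., $\crn(G^S - e) < k$ for every $e \in E(G^S)$.

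For the crossing number equality, I would proceed by iterated application of Corollary~\ref{cr:equalityForSphere}. Enumerate $S = \{v_1, \ldots, v_s\}$ in any order and let $G^{(0)} = G$, $G^{(j)} = $ zip product of $G^{(j-1)}$ with $G_{v_j}$ at $v_j$ and $u_{v_j}$. Each zip is along a cut of size $d_G(v_j) \in \{2,3\}$, so Corollary~\ref{cr:equalityForSphere} applies at every step, giving $\crn(G^{(j)}) = \crn(G^{(j-1)}) + \crn(G_{v_j}) = \crn(G^{(j-1)}) + k_{v_j}$. Summing yields $\crn(G^S) = \crn(G) + \sum_{v \in S} k_v = k$.

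For criticality, I would classify each edge $e \in E(G^S)$ as one of two types. Type (a): $e$ lies in $G_v - u_v$ for some $v \in S$. Then $e$ is not incident to $u_v$, so $u_v$ still has degree $d_G(v) \le 3$ in $G_v - e$. Hence $G^S - e$ decomposes in exactly the same way as $G^S$, only with $G_v$ replaced by $G_v - e$. Applying Corollary~\ref{cr:equalityForSphere} to this decomposition gives $\crn(G^S - e) = \crn(G) + \crn(G_v - e) + \sum_{v' \neq v} k_{v'} < k$, where the strict inequality follows from the $k_v$-criticality of $G_v$. Type (b): $e$ is a blue edge arising from the zip operations, corresponding to some original edge $xy \in E(G)$. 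Since $S$ is a vertex cover, at least one of $x, y$ lies in $S$; say $y \in S$. I would then redo the iterated zipping in an order that zips $y$ \emph{last}. In the final zip, $e$ is one of the blue (cut) edges between the intermediate graph and the critical graph $G_y$. The key fact from~\cite{DB2} then guarantees that blue edges produced by zipping with a critical graph are themselves crossing-critical in the result, giving $\crn(G^S - e) < k$.

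The main obstacle is making the type (b) argument rigorous in the iterated setting. One must verify that the order in which zips are performed can be chosen freely, so that for each blue edge $e$ the final zip is with the critical $G_y$; and one must check that the criticality statement from~\cite{DB2} indeed applies when the \emph{other} side of the zip is not necessarily critical but only has the correct (intermediate) crossing number. Because each zip takes place along a cut of size at most three, exactly the regime for which Corollary~\ref{cr:equalityForSphere} and the criticality-propagation result of~\cite{DB2} are formulated, careful bookkeeping of which original edge becomes which blue edge in which zip should suffice, and no new technique is required.
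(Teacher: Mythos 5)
Your proof is correct and takes essentially the same approach as the paper, which (tersely) attributes the crossing-number equality to iterated application of Corollary~\ref{cr:equalityForSphere} and the edge-criticality to the arguments of~\cite{DB2}. Your explicit two-case analysis, with the reorder-and-zip-$G_y$-last trick for blue edges, is exactly the bookkeeping the paper defers to~\cite{DB2}, and the concern you raise about the other zip partner not being critical resolves favorably: writing $G^S-e$ as a zip product at vertices of degree $d-1\le 2$ and invoking the upper bound of Lemma~\ref{lm:upper}, the strict drop $\crn(G_y-e_y)<k_y$ on the $G_y$ side alone already forces $\crn(G^S-e)<\crn(G^S)$.
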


\begin{corollary}
Let $G$ be any graph with $m$ edges and crossing number $r$. For any $k\ge
m+r+1$, there exists an infinite family of $k$-crossing-critical graphs that all
contain $G$ as a subdivision. 
\end{corollary}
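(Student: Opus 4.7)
The plan is to build the required infinite family by subdividing $G$ once per edge and then attaching crossing-critical gadgets at every new vertex via Corollary~\ref{cr:criticalCover}.

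First I would let $G'$ be the graph obtained from $G$ by subdividing every edge of $G$ exactly once. Subdivision preserves the crossing number, so $\crn(G')=r$, and the $m$ new vertices form a set $S$ of degree-$2$ vertices that is a vertex cover of $G'$, because every edge of $G'$ is incident to exactly one subdivision vertex. Hence $(G',S)$ satisfies the hypotheses of Corollary~\ref{cr:criticalCover}.

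Next I would set $t:=k-r-m+1$. The hypothesis $k\ge m+r+1$ yields $t\ge 2$, so I may invoke the existence of an infinite family $\{H_i\}_{i\in\NN}$ of pairwise non-isomorphic $t$-crossing-critical graphs, each carrying a distinguished vertex $u_i^*$ of degree $2$; such families are available by iterating Corollary~\ref{cr:criticalZip} on a known infinite family of $2$-crossing-critical graphs with degree-$2$ vertices. I would then fix a distinguished $v^*\in S$, and for every other $v\in S$ fix once and for all a $1$-crossing-critical graph $G_v$ with a degree-$2$ vertex $u_v$ (for instance, a suitable subdivision of $K_{3,3}$). Plugging these choices into Corollary~\ref{cr:criticalCover} with $G_{v^*}:=H_i$ produces, for each $i\in\NN$, a graph $\Gamma_i$ that is $k$-crossing-critical, since its crossing number equals $r+(m-1)\cdot 1+t=k$.

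Two things would then remain to verify. First, every $\Gamma_i$ contains $G$ as a subdivision: at each zip, the deleted degree-$2$ vertex $v\in S$ is bridged by a simple path through $G_v-u_v$, which is connected because crossing-critical graphs are $2$-connected; concatenating these paths realises a subdivision of $G$ inside $\Gamma_i$. Second, the family $\{\Gamma_i\}_{i\in\NN}$ must be infinite up to isomorphism, and I expect this to be the main obstacle: once the $H_i$-gadget has been glued into a common host one has to separate the $\Gamma_i$ intrinsically. A clean way to sidestep the difficulty is to choose the $H_i$ with strictly increasing order $|V(H_i)|$, so that $|V(\Gamma_i)|$ is strictly increasing and the $\Gamma_i$ are therefore pairwise non-isomorphic.
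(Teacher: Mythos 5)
Your proof takes essentially the same route as the paper: subdivide $G$ once per edge, apply Corollary~\ref{cr:criticalCover} with subdivided $K_{3,3}$'s at all but one subdivision vertex, and use a Kochol-derived gadget at the remaining one to obtain infinitude. The only difference is book-keeping: the paper first builds an $(r+m+1)$-critical graph $G^S$ and then tops up by zipping $l = k-r-m-1$ further copies of $K_{3,3}$ onto it, whereas you fold those $l$ copies into the distinguished gadget beforehand, obtaining a $t$-critical gadget with $t = l+2 = k-r-m+1$. Up to the order in which the zips are performed, this is the same construction.

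Two points in your write-up deserve tightening. First, the blanket claim that ``crossing-critical graphs are $2$-connected'' is false (two copies of $K_5$ identified at a single vertex is $2$-crossing-critical and has a cut vertex). What you actually need is that the particular gadgets $G_v$ you insert are $2$-connected, so that $G_v-u_v$ is connected and the bridging path exists; this does hold for subdivided $K_{3,3}$'s, for Kochol's graphs, and for their zip products, but it should be stated for those gadgets rather than for crossing-critical graphs in general. Second, the existence of an infinite family of $t$-crossing-critical graphs carrying a distinguished degree-$2$ vertex, for every $t\ge 2$, is asserted but not quite justified: after zipping $t-2$ subdivided $K_{3,3}$'s onto a subdivided Kochol graph via Corollary~\ref{cr:criticalZip}, one must check that a degree-$2$ vertex survives for the final zip into $G'$. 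Subdividing two edges of each Kochol graph (using one resulting degree-$2$ vertex for the intermediate zips and reserving the other) repairs this, and pairwise non-isomorphism then follows, as you note, from the strictly increasing orders. These are minor, fixable gaps; the substance matches the paper's argument.
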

\begin{proof}
Let $G'$ be obtained from $G$ by subdividing every edge. The new vertices all
have degree two and form a vertex cover $S$ of $G'$. For a selected vertex $w\in
S$, let $G_w$ be any graph from the infinite family of $2$-crossing-critical
graphs, constructed by Kochol in \cite{K}, with one of its edges subdivided by a
vertex $u_w$. For any $v\in S\setminus \{w\}$, let $G_v$ be a $K_{3,3}$ with one
edge subdivided by a vertex $u_v$. By Corollary \ref{cr:criticalCover}, $G^S$ is
a $r+m+1$-crossing-critical graph. For $l=k-r-m-1>0$, zipping $l$ copies of
$K_{3,3}$ to $G^S$ establishes the claim. 
\end{proof}

\begin{corollary}
\label{coro:reverse}
Let $G$ be a 3-edge-connected crossing critical graph and let $F\subseteq E(G)$
be a minimal edge cut of $G$ of size $3$. Let $H_1$ and $H_2$ be the two
components of $G-F$ and, for $i=1,2$, let $G_i:=G/H_{3-i}$. Then there exists a
crossing critical graph $J_i$ with $H_i\subseteq J_i \subseteq G_i$.
\end{corollary}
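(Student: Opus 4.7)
The plan is to lift the crossing criticality of $G$ through the quotient $G_i$ by means of the additivity provided by Corollary~\ref{cr:equalityForSphere}, and then to trim the three edges of $G_i$ joining $v_i$ to $H_i$ so that the resulting subgraph becomes critical at \emph{all} of its edges. Set $k_i := \crn(G_i)$; Corollary~\ref{cr:equalityForSphere} gives $\crn(G) = k_1 + k_2$. The first step is to show that every $e \in E(H_i)$ is already critical inside $G_i$: since such $e$ avoids $v_i$, the graph $G - e$ is still the zip product of $G_i - e$ and $G_{3-i}$ at vertices of degree~$3$, so Theorem~\ref{th:mainCrn} yields
\[
\crn(G - e) \ge \crn(G_i - e) + \crn(G_{3-i}) = \crn(G_i - e) + k_{3-i},
\]
and the crossing criticality of $G$ gives $\crn(G - e) < k_1 + k_2$; subtracting produces $\crn(G_i - e) < k_i$ for every $e \in E(H_i)$.

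Next I would build $J_i$ by pruning the set $F_i'$ of the three edges of $G_i$ incident to $v_i$. Among subsets $S \subseteq F_i'$ satisfying $\crn(H_i \cup S) = k_i$, pick one $S^*$ that is inclusion-minimal; the choice $S = F_i'$ witnesses existence. Put $J_i := H_i \cup S^*$, so $H_i \subseteq J_i \subseteq G_i$ and $\crn(J_i) = k_i$. For $e \in S^*$, the minimality of $S^*$ gives $\crn(J_i - e) < k_i$, and for $e \in E(H_i)$ the monotonicity $\crn(J_i - e) \le \crn(G_i - e)$ combined with the first paragraph gives $\crn(J_i - e) < k_i$. Every edge of $J_i$ falls into one of these two categories, so $J_i$ is $k_i$-crossing critical, as required.

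The one delicate point I foresee is the justification that Theorem~\ref{th:mainCrn} really applies to $G - e$, i.e., that the zip-product structure survives the deletion of $e \in E(H_i)$. This is immediate because such $e$ is disjoint from both $v_1$ and $v_2$, so the degrees at the zipped vertices remain~$3$ and the original bijection $\sigma$ is still well-defined. A trivial edge case is $k_i = 0$, in which the strict inequality of the first paragraph forces $E(H_i) = \emptyset$, and one may simply take $J_i := H_i$.
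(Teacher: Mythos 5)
Your proof is correct and follows essentially the same strategy as the paper: you establish that every edge of $H_i$ is critical in $G_i$ via the additivity theorems applied to $G$ and $G-e$, then peel away redundant edges incident to $v_i$ to reach a critical subgraph, filling in the step the paper leaves as ``easy to see''. The one wrinkle the paper notes that you gloss over is that $3$-edge-connectivity of $G$ guarantees $H_i-e$ remains connected, which is what keeps the zip-product decomposition of $G-e$ well-posed before Theorem~\ref{th:mainCrn} is invoked.
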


\begin{proof}
It is easy to see that it is sufficient to show that, for $i=1,2$, the following
is true: if $e\in E(H_i)$, then $\crn(G_i-e)<\crn(G_i)$.

For a fixed $i\in\{1,2\}$, let $e$ any fixed edge of $H_i$. Since $G$ is
$3$-edge-connected, $H_i-e$ is connected. We can apply Theorem~\ref{th:mainCrn}
to each of the graphs $G$ and $G-e$ and obtain that
$\crn(G)=\crn(G_i)+\crn(G_{3-i})$ and $\crn(G-e)=\crn(G_i-e)+\crn(G_{3-i})$.
Now, note that these equations and the fact $\crn(G-e)<\crn(G)$, imply
$\crn(G_i-e)<\crn(G_i)$, as desired.
\end{proof}

The following result was established in \cite{LS}:

\begin{theorem} [\cite{LS}]
\label{coro:LS} 
Let $G$ be a connected crossing-critical graph with minimum degree at least $3$.
Then there is a collection $J_1, J_2 ,\ldots , J_\ell$ of $3$-edge-connected
crossing critical graphs, each of which is contained as a subdivision in $G$,
such that $\crn(G)= \sum_{i=1}^\ell \crn(J_i)$.
\end{theorem}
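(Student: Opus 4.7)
The plan is to induct on $|E(G)|$, iteratively resolving $2$-edge cuts via Corollary~\ref{cr:equalityForSphere}. If $G$ is already $3$-edge-connected, take $\ell := 1$ and $J_1 := G$. Otherwise, first observe that a crossing-critical graph has no bridge: a bridge $e$ would allow optimal drawings of the two components of $G - e$ to be reconnected by $e$ without new crossings, forcing $\crn(G) \le \crn(G-e)$ and hence $\crn(G-e) = \crn(G)$, contradicting criticality. Thus $G$ is $2$-edge-connected, so there is a minimal $2$-edge cut $F$ with components $H_1, H_2$ of $G - F$ and augmented components $G_i := G/H_{3-i}$. Corollary~\ref{cr:equalityForSphere} then gives $\crn(G) = \crn(G_1) + \crn(G_2)$.

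To apply induction on each side, the goal is a crossing-critical graph of minimum degree at least $3$, contained in $G$ as a subdivision, whose crossing number equals $\crn(G_i)$. For $i \in \{1, 2\}$, first suppress the degree-$2$ vertex of $G_i$ to obtain $\tilde G_i$ with $\crn(\tilde G_i) = \crn(G_i)$; the edge arising from the suppression is realized in $G$ by concatenating the two edges of $F$ with any $H_{3-i}$-path between their endpoints, so $G$ contains $\tilde G_i$ as a subdivision. If $\crn(\tilde G_i) = 0$, discard this side. Otherwise, pick an edge-minimal subgraph $K_i \subseteq \tilde G_i$ with $\crn(K_i) \ge \crn(\tilde G_i)$. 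Edge-minimality forces $\crn(K_i) = \crn(\tilde G_i)$ and renders $K_i$ crossing-critical; moreover, each connected component of $K_i$ is itself crossing-critical, and these component crossing numbers sum to $\crn(\tilde G_i)$. For each component $C$ of $K_i$ with $\crn(C) \ge 1$, suppressing all degree-$2$ vertices yields a graph $C'$ that is connected, crossing-critical, of minimum degree at least $3$ (since $\crn(C') \ge 1$ prevents $C$ from being a cycle), and satisfies $\crn(C') = \crn(C)$.

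Each such $C'$ is a topological minor of $C \subseteq K_i \subseteq \tilde G_i$, so $G$ contains $C'$ as a subdivision; moreover $|E(C')| \le |E(\tilde G_i)| < |E(G)|$ strictly, since contracting the connected subgraph $H_{3-i}$ removes at least its internal edges, and $H_{3-i}$ has at least one internal edge because its endpoints of $F$ have degree at least $3$ in $G$ while at most two edges of $F$ are incident to them. The inductive hypothesis therefore applies to each $C'$ and produces a family of $3$-edge-connected crossing-critical topological minors of $C'$, hence of $G$, whose crossing numbers sum to $\crn(C')$. Concatenating these families across all relevant components of $K_1$ and $K_2$ gives the desired decomposition with total crossing number $\crn(G_1) + \crn(G_2) = \crn(G)$.

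The main obstacle is the extraction step, which on each side must produce a graph that is simultaneously connected, crossing-critical of the correct crossing number, of minimum degree at least $3$, and contained in $G$ as a subdivision. The edge-minimize-then-restrict-to-component-then-suppress-degree-$2$ construction is tailored to secure all of these properties at once; once this is in place, the additivity supplied by Corollary~\ref{cr:equalityForSphere} drives the induction to completion.
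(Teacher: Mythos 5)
This statement is quoted from Lea\~nos and Salazar~\cite{LS}; the paper cites it without supplying a proof, so there is no ``paper's own proof'' to match against. Your proposal is therefore judged on its own terms, and it stands up: it correctly re-derives the result from Corollary~\ref{cr:equalityForSphere} (whose $d=2$ case the paper notes is precisely the known additivity fact from~\cite{LS}), so morally you are reconstructing the original argument through the lens of this paper's machinery rather than doing something independent of it.

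Checking the details: the bridgelessness observation is sound (a bridge would give $\crn(G-e)=\crn(G)$, contradicting criticality since $\crn(G)\ge 1$). A minimum $2$-edge cut is automatically minimal, so Corollary~\ref{cr:equalityForSphere} applies and yields $\crn(G)=\crn(G_1)+\crn(G_2)$. Suppressing the degree-$2$ vertex $v_i$ preserves the crossing number, and the suppressed edge is realized in $G$ through an $H_{3-i}$-path, giving the subdivision containment. Edge-minimality of $K_i$ correctly yields a crossing-critical subgraph with $\crn(K_i)=\crn(\tilde G_i)$, its components are each crossing-critical with crossing numbers summing to $\crn(K_i)$, and a crossing-critical component $C$ with $\crn(C)\ge1$ indeed has no vertices of degree $\le1$, is not a cycle, and cannot have a pendant cycle (any such edge could be removed without decreasing $\crn$), so suppressing degree-$2$ vertices yields $C'$ with minimum degree at least $3$, no loops, and the same crossing number. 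The edge-count argument $|E(\tilde G_i)|<|E(G)|$ is correct (in fact it holds already because $|E(G)|-|E(\tilde G_i)|=|E(H_{3-i})|+1\ge1$, so your extra justification via internal edges of $H_{3-i}$, while also valid, is not needed). Transitivity of topological-minor containment then delivers each recursively obtained $J$ as a subdivision inside $G$, and the crossing numbers telescope to $\crn(G)$. One could streamline by observing that a component of an edge-minimal $K_i$ with an edge but crossing number zero is already excluded by edge-minimality, but as written nothing is wrong. The proof is correct.
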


An appropriately repeated application of Corollaries~\ref{coro:LS}
and~\ref{coro:reverse} yields the following.

\begin{corollary}
\label{coro:decomposition} 
Let $G$ be a connected crossing-critical graph with minimum degree at least $3$.
Then there is a collection $I_1, I_2 ,\ldots , I_{\ell'}$ of
internally-$4$-edge-connected crossing critical graphs, each of which is
contained as a subdivision in $G$, such that $\crn(G)= \sum_{i=1}^{\ell'}
\crn(I_i)$. 
\end{corollary}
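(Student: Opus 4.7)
The plan is to decompose $G$ by alternating Theorem~\ref{coro:LS}, which extracts $3$-edge-connected crossing-critical subdivisions of $G$, with Corollary~\ref{coro:reverse}, which splits any such piece along a non-trivial $3$-edge-cut into two strictly smaller crossing-critical pieces. I proceed by strong induction on $|V(G)|$; the procedure terminates because each split strictly decreases the vertex count of the piece being processed.

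For the inductive step, first apply Theorem~\ref{coro:LS} to $G$ to obtain $3$-edge-connected crossing-critical subdivisions $J_1,\dots,J_\ell$ of $G$ with $\crn(G)=\sum_i\crn(J_i)$. Any $J_i$ that is already internally-$4$-edge-connected is placed directly in the output collection. For any other $J_i$, I fix a minimal $3$-edge-cut $F$ whose sides $H_1,H_2$ of $J_i-F$ each have at least two vertices, and apply Corollary~\ref{coro:reverse} to obtain crossing-critical graphs $K_1,K_2$ with $H_j\subseteq K_j\subseteq J_i/H_{3-j}$. The criticality-of-$H_j$-edges argument inside the proof of Corollary~\ref{coro:reverse}, together with Corollary~\ref{cr:equalityForSphere} applied to $J_i$ along $F$, yields $\crn(K_1)+\crn(K_2)=\crn(J_i)$. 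Since $|V(K_j)|\le|V(J_i)|-1<|V(G)|$, the induction hypothesis, applied after suppressing any degree-$2$ vertices of $K_j$ (an operation that preserves both $\crn$ and crossing-criticality and thus secures the minimum-degree-$3$ hypothesis), decomposes each $K_j$ into internally-$4$-edge-connected crossing-critical subdivisions of $K_j$ whose crossing numbers sum to $\crn(K_j)$.

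To splice these outputs into a decomposition of $G$ itself, I must realise each $K_j$ as a subdivision in $G$. The key is that the $3$-edge-connectivity of $J_i$ guarantees a bundle of the contracted vertex $v_j$ inside $H_{3-j}$: three edge-disjoint paths from the endpoints of $F$ in $H_{3-j}$ to a common sink. Together with the three edges of $F$, this bundle is a tree in $J_i$ realising the star at $v_j$, so $K_j\subseteq J_i/H_{3-j}$ is contained as a subdivision in $J_i$; transitivity with the subdivision of $J_i$ in $G$ then places $K_j$ as a subdivision in $G$, and the inductive pieces inside $K_j$ lift accordingly. I expect this subdivision-realisation step to be the principal technical obstacle, since Corollary~\ref{coro:reverse} delivers $K_j$ only as an abstract subgraph of the contracted graph, and a naive routing of $v_j$ via internally vertex-disjoint paths in $H_{3-j}$ can fail under mere edge-connectivity.
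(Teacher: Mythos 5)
Your plan — alternate Theorem~\ref{coro:LS} with Corollary~\ref{coro:reverse}, inducting on $|V(G)|$ — is exactly the ``appropriately repeated application'' the paper has in mind, and your bookkeeping of crossing numbers and the degree-$2$ suppression observations are sound.

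The one place that needs repair is the subdivision-realisation step, and it is easier than you feared. A bundle consists only of \emph{edge}-disjoint paths, so together with $F$ it need not be a tree; your sentence asserting this is false as written. But you also do not need bundles or $3$-edge-connectivity here: since $v_j$ has degree $|F|=3$ in $G_j=J_i/H_{3-j}$ and $H_{3-j}$ is connected, take a minimal connected subgraph $T$ of $H_{3-j}$ containing the $H_{3-j}$-endvertices of $F$. Then $T$ is a tree all of whose leaves lie among those endvertices, so it has at most three leaves and is a path or a subdivided claw. Hence there is a vertex $w\in V(T)$ whose tree-paths to the (at most three) endvertices of $F$ are pairwise internally vertex-disjoint; in the degenerate cases where endvertices of $F$ coincide, take $w$ to be a repeated endvertex and let the corresponding $F$-edges serve as trivial legs. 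Prolonging each tree-path by its $F$-edge produces a subdivision of the star at $v_j$ inside $J_i$, so $G_j$ — and therefore its subgraph $K_j$, and the suppressed graph obtained from $K_j$ — is a topological minor of $J_i$, and the induction closes exactly as you intended.
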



We conclude with a lower bound for the minor crossing number 
of the Cartesian product of an arbitrary graph with an arbitrary tree. 
Arguments of \cite{DB1} establish that $G\Box T$ can be obtained as a zip
product of graphs $\dset{G^{(d_G(v))}}{v\in V(T)}$, where $G^{(i)}$ 
denotes the join of $G$ with an independent set of $i$ vertices. Then, 
Theorem \ref{th:mainMcrn} establishes the following lower bound:

\begin{corollary}
Let $T$ be any tree and $G$ any graph. Then, $$\mcrs(T\Box G)\ge
\sum_{v\in V(T)}\mcrs(G^{(d_T(v))}).$$ 
\end{corollary}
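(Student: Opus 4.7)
The plan is to apply Theorem~\ref{th:mainMcrn} iteratively along the edges of $T$, peeling off one leaf at a time, with a suitably strengthened inductive hypothesis to accommodate the extra apex that the zip construction introduces on the reduced side. For a graph $G$, a tree $T$, and a function $b : V(T) \to \{0,1,2,\ldots\}$, let $H_{T,b,G}$ denote the graph obtained from $T \Box G$ by attaching, for each $v \in V(T)$, exactly $b(v)$ new vertices, each adjacent to every vertex of the fiber $\{v\}\times V(G)$. The strengthened inequality to be proved is
\[
\mcrs(H_{T,b,G}) \;\ge\; \sum_{v\in V(T)}\mcrs\bigl(G^{(d_T(v)+b(v))}\bigr),
\]
which specializes to the stated bound when $b\equiv 0$.

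The induction is on $|V(T)|$. The base case $|V(T)|=1$ is immediate, since $H_{T,b,G}$ then equals $G^{(b(v_0))}=G^{(d_T(v_0)+b(v_0))}$ for the unique vertex $v_0$. For the inductive step, I would choose a leaf $u$ with unique neighbor $w$ and observe that the $|V(G)|$ parallel edges joining the fibers $\{u\}\times V(G)$ and $\{w\}\times V(G)$ form a minimum cut of $H_{T,b,G}$ whose diagonal bijection exhibits $H_{T,b,G}$ as a zip product of: on the $u$-side, the graph $G^{(b(u)+1)}=G^{(d_T(u)+b(u))}$ (using $d_T(u)=1$); on the other side, the graph $H_{T-u,b',G}$, where $b'(w)=b(w)+1$ and $b'$ agrees with $b$ elsewhere. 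Theorem~\ref{th:mainMcrn} then gives
\[
\mcrs(H_{T,b,G}) \;\ge\; \mcrs\bigl(G^{(d_T(u)+b(u))}\bigr)+\mcrs(H_{T-u,b',G}),
\]
and the induction hypothesis bounds the last summand by $\sum_{v\in V(T-u)}\mcrs\bigl(G^{(d_{T-u}(v)+b'(v))}\bigr)$. Since $d_{T-u}(v)+b'(v)=d_T(v)+b(v)$ for every $v\in V(T-u)$, the two bounds combine to the desired sum.

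The only real obstacle is spotting the need for the parameter $b$: a naive induction on trees alone fails because removing the leaf $u$ drops the degree of $w$ by one, yet the target sum should still see $d_T(w)$ at the $w$-piece. Introducing $b$ together with the compensating identity $d_{T-u}(w)+(b(w)+1)=d_T(w)+b(w)$ is exactly what closes the induction; everything else is a straightforward translation of the zip-product decomposition from~\cite{DB1} into the language of Theorem~\ref{th:mainMcrn}.
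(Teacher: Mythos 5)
Your proof is correct and takes essentially the same approach as the paper: decomposing $T\Box G$ via zip products along tree edges and applying Theorem~\ref{th:mainMcrn} iteratively. The paper simply cites the zip-product decomposition from~\cite{DB1}, whereas you supply the details via an explicit leaf-peeling induction, with the apex-count parameter $b$ providing exactly the strengthening needed to close the inductive step.
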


\section*{Acknowledgement}
The authors like to express gratitude to the organizers of the BIRS workshop ``Crossing numbers turn useful'', as well as to the BIRS staff and the BANFF center, for the very stimulating workshop where several years of different approaches to the problem culminated in the above results. We also thank Bruce Richter for helpful discussions.



M.\ Chimani was funded via a Carl-Zeiss-foundation juniorprofessorship.
D.\ Bokal was funded through Slovenian Research Agency basic research projects J6-3600, J1-2043 and research programme P1-0297.


\begin{thebibliography}{100}
\bibitem{BB}
   L.~Beaudou, D.~Bokal, 
   On the Sharpness of Some Results Relating Cuts and Crossing Numbers, 
   Electronic J.\ Combin. 17 (2010), R96.
  
\bibitem{DB}
  D.~Bokal,
  On the crossing number of Cartesian products with paths,
  J. Combin. Theory Ser. B 97 (2007), 381--384.
  
\bibitem{DB1}
  D.~Bokal,
  On the crossing number of Cartesian products with trees,
  J. Graph Theory 56 (2007), 287--300.
  
\bibitem{DB2}
  D.~Bokal,
  Infinite families of crossing-critical graphs with prescribed average degree and crossing number,
  J. Graph Theory 65 (2010), 139--162.
  
\bibitem{BCSV}
  D.~Bokal, E.~Czabarka, L.~Szek\'{e}ly, I.~Vr\v{t}o,
  General lower bounds for the minor crossing number of graphs,
  Discrete Comput.\ Geom.\ 44 (2010), 463--483.

\bibitem{BFM}
  D.~Bokal, G.~Fijav\v{z}, B.~Mohar,
  The minor crossing number,
  SIAM J. Discrete Math 20 (2006), 344--356.

\bibitem{K}
  M.~Kochol,
  Construction of crossing critical graphs, 
  Discrete Math. 66 (1987), 311--313.
  
\bibitem{LS}
  J.~Lea\~{n}os, G.~Salazar,
  On the additivity of crossing numbers of graphs,
  J.~Knot Theory Ramifications 17, (2008), 1043--1050.  

\end{thebibliography}
\end{document}